\def\cal{\mathcal}
\def\Bbb{\mathbb}
\def\vtr{\trianglelefteq}
\newtheorem{thm}{Theorem}
\newtheorem{prop}[thm]{Proposition}
\newtheorem{cor}[thm]{Corollary}
\newtheorem{defn}[thm]{Definition}
\numberwithin{equation}{section}
\begin{document}
\title[Some virtually poly-free Artin groups]{Some
  virtually poly-free Artin groups}
\author[S.K. Roushon]{S.K. Roushon}
\address{School of Mathematics\\
Tata Institute\\
Homi Bhabha Road\\
Mumbai 400005, India}
\email{roushon@math.tifr.res.in} 
\urladdr{http://www.math.tifr.res.in/\~\ roushon/}
\thanks{July 05, 2020}
\begin{abstract} In this short note we prove that a class of
  Artin groups of affine and complex types are virtually poly-free, answering 
  partially the question if all Artin groups are
  virtually poly-free.
 \end{abstract}
 
\keywords{Poly-free groups, Artin groups, Orbifold braid groups.}

\subjclass[2020]{Primary: 20F36 Secondary: 57R18.}
\maketitle

We recall the following definition.

\begin{defn}\label{VPF} {\rm Let $\cal F$ and $\cal {VF}$ denote the class of free groups
    and virtually-free groups, respectively. Let $\cal C$ be either $\cal F$ or $\cal {VF}$.
    A group $G$ is
    called {\it virtually poly}-$\cal C$, if $G$ contains a finite index subgroup $H$, and $H$
    admits a normal series $1=H_0\vtr H_1\vtr H_2\vtr \cdots \vtr H_n=H$, such that 
    $H_{i+1}/H_i\in {\cal C}$, for $i=0,1,\ldots, n-1$. In this case, $H$ is called {\it poly-$\cal C$} or that
    $H$ has a {\it poly-$\cal C$\ structure}. The minimum such $n$ is called the
  {\it length} of the poly-$\cal C$ structure.}\end{defn}

Poly-$\cal F$ groups have nice properties like, locally indicable and right orderable.
In [\cite{MB}, Question 2] it was asked if all Artin groups are virtually
poly-$\cal F$. Among the finite type Artin groups, 
the groups of types $A_n$, $B_n (=C_n)$, $D_n$,
$F_4$, $G_2$ and $I_2(p)$ are already known to be virtually poly-$\cal F$ (\cite{Br}).

Here, we extend this class and prove the following theorem.

\begin{thm}\label{mt} Let $\cal A$ be an Artin group of the affine type
  $\tilde A_n$, $\tilde B_n$, $\tilde C_n$, $\tilde D_n$ or of the finite
  complex type $G(de,e,r)$ ($d,r\geq 2$).
  Then, $\cal A$ is virtually poly-$\cal F$.\end{thm}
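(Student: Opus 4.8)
The plan is to realize each group in the statement, as a subgroup or as a finite-index subgroup, of a (possibly orbifold) braid group of a genus-zero $2$-dimensional orbifold whose orbifold fundamental group is virtually free, and then to feed this into the orbifold Fadell--Neuwirth fibration argument. I shall use two elementary facts. First, a subgroup $H$ of a virtually poly-$\cal F$ group $G$ is again virtually poly-$\cal F$: if $G'$ is a finite-index poly-$\cal F$ subgroup of $G$, then $H\cap G'$ has finite index in $H$ and, intersecting the poly-$\cal F$ series of $G'$ with it, acquires a poly-$\cal F$ series. Second, if $1\to N\to G\to Q\to 1$ is exact with $N$ finitely generated virtually free and $Q$ virtually poly-$\cal F$, then $G$ is virtually poly-$\cal F$: one replaces $N$ by a characteristic finite-index free subgroup, pulls back a finite-index poly-$\cal F$ subgroup of $Q$, reduces to a finite-by-(poly-$\cal F$) group, and finishes by induction on the poly-$\cal F$ length, using that a finitely generated group acting on a tree with finite vertex stabilizers is virtually free.

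For the finite complex type $G(de,e,r)$ with $d\geq 2$ I would argue directly, with no orbifolds. Because $d\geq 2$, the reflection arrangement of $G(de,e,r)$ consists, as a set of hyperplanes, of $\{z_i=\zeta z_j:\zeta^{de}=1,\ i<j\}$ together with the coordinate hyperplanes $\{z_i=0\}$, and this is precisely the reflection arrangement of $G(de,1,r)$. Hence $G(de,e,r)$ and $G(de,1,r)$ act freely on the same hyperplane complement $M$, with $[G(de,1,r):G(de,e,r)]=e$, so $\cal A=\pi_1(M/G(de,e,r))$ is of index $e$ in $B(G(de,1,r))\cong A(B_r)$, the annular braid group on $r$ strands. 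Now $A(B_r)=\pi_1(\mathrm{UConf}_r(\Bbb C^*))$, and its pure subgroup $\pi_1(\mathrm{PConf}_r(\Bbb C^*))$ is poly-$\cal F$ by the classical Fadell--Neuwirth tower $\mathrm{PConf}_k(\Bbb C^*)\to\mathrm{PConf}_{k-1}(\Bbb C^*)$, whose fibre is a punctured plane with free fundamental group; as this pure subgroup has index $r!$, the group $A(B_r)$, and with it $\cal A$, is virtually poly-$\cal F$ by the first fact.

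For the affine types I would use the exponential substitution $z\mapsto(e^{2\pi i z_0},\ldots,e^{2\pi i z_n})$. In type $\tilde A_n$ it leads, after quotienting the complexified affine-arrangement complement by the affine Weyl group, to the (known) normal inclusion $\cal A=A(\tilde A_n)\vtr A(B_{n+1})$ with quotient $\Bbb Z$, the total winding number about the deleted point; the first fact and the previous paragraph then conclude this case. For $\tilde B_n$, $\tilde C_n$, $\tilde D_n$ the same substitution, combined with the sign symmetry of the root system, identifies $\cal A$, up to finite index, with the orbifold braid group of a genus-zero $2$-orbifold $\cal O$: a disc or annulus carrying one or two cone points of order $2$, arising from the fixed points of $z\mapsto z^{-1}$ (equivalently $z\mapsto -z$). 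The orbifold fundamental group of such an $\cal O$ is $\Bbb Z/2$, $\Bbb Z/2*\Bbb Z/2$, or $F_k*\Bbb Z/2*\Bbb Z/2$, hence virtually free. Iterating the orbifold Fadell--Neuwirth fibration $\mathrm{PConf}_k(\cal O)\to\mathrm{PConf}_{k-1}(\cal O)$, whose fibre is $\cal O$ with $k-1$ extra punctures --- again genus zero with virtually free orbifold fundamental group --- exhibits the pure orbifold braid group of $\cal O$ as a tower of extensions as in the second fact; induction on $k$ then shows it, hence the full orbifold braid group and $\cal A$, virtually poly-$\cal F$.

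I expect the real work to lie in two places. First, one must make orbifold configuration spaces and the orbifold Fadell--Neuwirth map precise and show that the latter is a fibration with the stated fibre over an aspherical base, so that the homotopy long exact sequence degenerates to the short exact sequences used above --- this is where the configuration Lie groupoid machinery of earlier work is needed. Second, one must pin down the orbifolds exactly, that is, verify the identifications of $A(\tilde B_n)$, $A(\tilde C_n)$, $A(\tilde D_n)$ (and of the relevant finite-index subgroups) with braid groups of specific discs or annuli with order-$2$ cone points, using Allcock-type braid pictures for affine Artin groups. By comparison the passage from a poly-(virtually free) structure to a genuine virtually poly-$\cal F$ structure is routine, being repeated use of the two facts from the first paragraph.
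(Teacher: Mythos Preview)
Your proposal is correct and tracks the paper's architecture closely: embed the affine Artin group as a finite-index subgroup of an orbifold braid group via Allcock, run the orbifold Fadell--Neuwirth tower to obtain a poly-$\cal{VF}$ structure with finitely generated terms, and then upgrade to virtually poly-$\cal F$; the $\tilde A_n$ and $G(de,e,r)$ cases go through the embedding into $A(B_r)$ exactly as in the paper. The one substantive difference is the upgrade step. The paper invokes the recent $K(\pi,1)$ theorem for affine Artin groups to obtain torsion-freeness of $\cal A$, and then applies Proposition~\ref{fi} (torsion-free, finitely presented poly-$\cal{VF}$ with finitely presented series $\Rightarrow$ virtually poly-$\cal F$), whose base case is Stallings' theorem that a torsion-free virtually free group is free. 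Your ``second fact'' instead handles finite-by-free groups directly via tree actions, so that the orbifold braid group itself is seen to be virtually poly-$\cal F$ without any appeal to torsion-freeness of the Artin subgroup; this removes the dependence on the deep $K(\pi,1)$ input and is a genuine simplification. One minor imprecision: your description of the relevant orbifolds (``disc or annulus with one or two order-$2$ cone points'') does not exactly match Allcock's table --- $\tilde C_n$ uses the twice-punctured plane ${\Bbb C}(2,0)$ with no cone points, and $\tilde D_n$ the plane ${\Bbb C}(0,2;(2,2))$ with two cone points but no punctures --- though you rightly flag these identifications as something still to be pinned down.
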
 
  
The main idea behind the proof of Theorem \ref{mt} is the following result,
which is easily deducible from [\cite{R}, Theorem 2.2 and Remark 2.4]. 

Let ${\Bbb C}(m,k;q)$ be the orbifold, whose underlying space is the complex plane minus $m$ points
$p_1,p_2,\ldots , p_m\in {\Bbb C}$, with $k$ cone points
$x_1,x_2,\ldots x_k\in {\Bbb C}-\{p_1,p_2,\ldots , p_m\}$ of orders $q_1,q_2\ldots, q_k$,
respectively. $q$ denotes the $k$-tuple $(q_1,q_2,\ldots, q_k)$. Let $PB_n({\Bbb C}(m,k;q))$ be the configuration orbifold of $n$ distinct
points of ${\Bbb C}(m,k;q)$. By convention $PB_1({\Bbb C}(m,k;q))={\Bbb C}(m,k;q)$.

\begin{thm}\label{MR} The orbifold fundamental group $\pi_1^{orb}(PB_n({\Bbb C}(m,k;q)))$
  has a poly-$\cal {VF}$ structure, consisting of finitely
  presented subgroups in a normal series.\end{thm}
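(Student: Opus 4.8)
The plan is to prove Theorem \ref{MR} by induction on $n$, the essential geometric input being the orbifold version of the Fadell--Neuwirth fibration obtained by forgetting one of the $n$ configured points. Write $G_n=\pi_1^{orb}(PB_n({\Bbb C}(m,k;q)))$. By [\cite{R}, Theorem 2.2 and Remark 2.4], forgetting the last point gives an orbifold fibration $PB_n({\Bbb C}(m,k;q))\to PB_{n-1}({\Bbb C}(m,k;q))$ whose fibre is ${\Bbb C}(m,k;q)$ with $n-1$ extra punctures, that is, the orbifold ${\Bbb C}(m+n-1,k;q)$; since the base and total orbifolds are aspherical, the associated homotopy exact sequence reduces to the short exact sequence
\[ 1\to \pi_1^{orb}({\Bbb C}(m+n-1,k;q))\to G_n\to G_{n-1}\to 1. \]
The crucial observation is that the kernel here is always a finitely presented member of $\cal{VF}$: a planar orbifold with $m+n-1$ punctures and $k$ cone points of orders $q_1,\ldots,q_k$ has orbifold fundamental group $\langle\sigma_1,\ldots,\sigma_{m+n-1},\tau_1,\ldots,\tau_k\mid\tau_1^{q_1}=\cdots=\tau_k^{q_k}=1\rangle\cong F_{m+n-1}*{\Bbb Z}/q_1*\cdots*{\Bbb Z}/q_k$, which is finitely presented and virtually free (it contains a free subgroup of finite index, e.g.\ the kernel of the natural projection onto ${\Bbb Z}/q_1\times\cdots\times{\Bbb Z}/q_k$, which is free by the Kurosh subgroup theorem).

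For the base case $n=1$, the convention $PB_1({\Bbb C}(m,k;q))={\Bbb C}(m,k;q)$ gives $G_1\cong F_m*{\Bbb Z}/q_1*\cdots*{\Bbb Z}/q_k$, so $1\vtr G_1$ is already a poly-$\cal{VF}$ structure of the required kind (the two terms, $1$ and $G_1$, being finitely presented). For the inductive step, assume $G_{n-1}$ carries a poly-$\cal{VF}$ structure $1=K_0\vtr K_1\vtr\cdots\vtr K_{n-1}=G_{n-1}$ with every $K_i$ finitely presented. Let $\phi\colon G_n\to G_{n-1}$ denote the surjection above, with kernel $N=\pi_1^{orb}({\Bbb C}(m+n-1,k;q))$, and set $H_0=1$ and $H_j=\phi^{-1}(K_{j-1})$ for $j=1,\ldots,n$; note $H_1=N$ and $H_n=G_n$. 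Then $1=H_0\vtr H_1\vtr\cdots\vtr H_n=G_n$ is a normal series, because $N\vtr G_n$ and preimages of successively normal subgroups are successively normal. Moreover $\phi$ induces isomorphisms $H_{j+1}/H_j\cong K_j/K_{j-1}\in\cal{VF}$ for $1\le j\le n-1$, while $H_1/H_0=N\in\cal{VF}$; hence the $H_j$ give a poly-$\cal{VF}$ structure on $G_n$ (of length $n$). Finally each $H_j$ is finitely presented: $H_0$ and $H_1=N$ are, and for $j\ge1$ the extension $1\to N\to H_{j+1}\to K_j\to 1$ has finitely presented kernel and quotient, hence finitely presented middle term. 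This closes the induction.

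The one place calling for real care is the very first step: producing the orbifold Fadell--Neuwirth fibration, correctly identifying its fibre, and proving exactness of the short exact sequence, especially the injectivity of the fibre group on the left. All of this, as well as the finite presentability of the groups involved, is precisely what [\cite{R}, Theorem 2.2 and Remark 2.4] provides; everything afterwards is the routine pullback of a normal series along a surjection together with the two easy checks that membership in $\cal{VF}$ and finite presentability are inherited by the new quotients and terms --- which is why Theorem \ref{MR} is ``easily deducible'' from [\cite{R}].
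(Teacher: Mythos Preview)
Your proposal is correct and follows essentially the same approach as the paper: induction on $n$ via the orbifold Fadell--Neuwirth short exact sequence from [\cite{R}, Theorem 2.2 and Remark 2.4], together with the observation that the fibre group $\pi_1^{orb}({\Bbb C}(m+n-1,k;q))$ is finitely presented and virtually free. The paper's own proof is simply more terse --- it states the exact sequence and then declares the induction, leaving implicit the pullback construction of the normal series and the finite-presentation check that you have written out in full.
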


\begin{proof} Recall that, in [\cite{R}, Theorem 2.2 and Remark 2.4] we proved
  the following exact sequence. 
  The second homomorphism is induced by the projection to the first $n-1$ coordinates.

\centerline{
\xymatrix{1\ar[r]&K\ar[r]&\pi_1^{orb}(PB_n(S))\ar[r]&\pi_1^{orb}(PB_{n-1}(S))\ar[r]&1.}}
Here, $S={\Bbb C}(k,m; q)$, and $K$ is isomorphic to $\pi_1^{orb}(F)$,
$F=S-\{(n-1)-\text{regular points}\}$. By {\it regular points}
we mean points which are not cone points, that is, 
points in ${\Bbb C}-\{x_1,\ldots, x_m,p_1,\ldots, p_k\}$. That is, $F={\Bbb C}(k, m+n-1;q)$.

The theorem now follows by induction on $n$, since $\pi_1^{orb}({\Bbb C}(k,m;q))$ is finitely presented and 
virtually free, for all $k$, $m$ and $q$.
\end{proof}

Note that the symmetric group $S_n$ acts on $PB_n({\Bbb C}(m,k;q))$ by permuting the 
coordinates. Hence, 
the quotient $PB_n({\Bbb C}(m,k;q))/S_n$ is again an orbifold, and it is denoted by
$B_n({\Bbb C}(m,k;q))$.

Therefore, by Theorem \ref{MR} we have the following corollary, since if a
group has a finite index normal poly-$\cal {VF}$ subgroup, then the group is also
poly-$\cal {VF}$.

\begin{cor}\label{MRC} The orbifold
  fundamental group $\pi_1^{orb}(B_n({\Bbb C}(m,k;q)))$ is finitely presented, and has a poly-$\cal {VF}$
  structure, consisting of finitely
  presented subgroups in a normal series.\end{cor}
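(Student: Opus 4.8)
The plan is to combine the orbifold covering $PB_n({\Bbb C}(m,k;q))\to B_n({\Bbb C}(m,k;q))$ with the normal series produced by Theorem \ref{MR}, and then to carry out the bookkeeping needed for the finite-presentation claim.

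First I would note that the $S_n$-action on $PB_n({\Bbb C}(m,k;q))$ is free: a non-trivial permutation never fixes a configuration of $n$ \emph{distinct} points on the underlying space, and the orbifold structure is $S_n$-invariant, so $PB_n({\Bbb C}(m,k;q))\to B_n({\Bbb C}(m,k;q))$ is a covering of orbifolds with deck group $S_n$. This produces the short exact sequence
\[1\longrightarrow \pi_1^{orb}(PB_n({\Bbb C}(m,k;q)))\longrightarrow \pi_1^{orb}(B_n({\Bbb C}(m,k;q)))\longrightarrow S_n\longrightarrow 1,\]
so that $\pi_1^{orb}(PB_n({\Bbb C}(m,k;q)))$ sits inside $\pi_1^{orb}(B_n({\Bbb C}(m,k;q)))$ as a normal subgroup of index $n!$.

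Next, let $1=H_0\vtr H_1\vtr\cdots\vtr H_n=\pi_1^{orb}(PB_n({\Bbb C}(m,k;q)))$ be the normal series given by Theorem \ref{MR}, where each $H_i$ is finitely presented and each $H_{i+1}/H_i\in{\cal {VF}}$. Set $H_{n+1}=\pi_1^{orb}(B_n({\Bbb C}(m,k;q)))$; since $H_n$ is the kernel of the map to $S_n$, we have $H_n\vtr H_{n+1}$, and the extended series $1=H_0\vtr H_1\vtr\cdots\vtr H_n\vtr H_{n+1}$ has the same successive quotients as before, together with the new top quotient $H_{n+1}/H_n\cong S_n$. A finite group is virtually free, so $S_n\in{\cal {VF}}$, and hence $\pi_1^{orb}(B_n({\Bbb C}(m,k;q)))$ is poly-${\cal {VF}}$.

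Finally I would check finite presentability of the subgroups in this series. Each of $H_0,\ldots ,H_n$ is finitely presented by Theorem \ref{MR}, and $H_{n+1}=\pi_1^{orb}(B_n({\Bbb C}(m,k;q)))$ is finitely presented because it contains the finitely presented subgroup $\pi_1^{orb}(PB_n({\Bbb C}(m,k;q)))$ with finite index, finite presentability being inherited by finite-index overgroups. This gives the asserted poly-${\cal {VF}}$ structure by finitely presented subgroups. I expect the only point requiring real care to be the first step — identifying the $S_n$-quotient as a covering of orbifolds and extracting the displayed exact sequence; once that is in place, the rest is the standard fact that an extension of a poly-${\cal {VF}}$ group by a finite group is again poly-${\cal {VF}}$, decorated with the obvious finite-presentation bookkeeping.
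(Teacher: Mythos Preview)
Your proposal is correct and follows essentially the same approach as the paper: the paper simply notes that $\pi_1^{orb}(PB_n({\Bbb C}(m,k;q)))$ is a finite index normal subgroup of $\pi_1^{orb}(B_n({\Bbb C}(m,k;q)))$ and invokes the fact that a group containing a finite index normal poly-$\cal{VF}$ subgroup is itself poly-$\cal{VF}$. Your argument spells out this implication in detail (extending the series by one term with finite top quotient $S_n$) and makes the finite-presentation bookkeeping explicit, but the underlying idea is identical.
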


To prove our main theorem, furthermore, we need the following two results.

\begin{thm}\label{AO} All affine type Artin groups are torsion free.\end{thm}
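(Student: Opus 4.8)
The plan is to derive torsion-freeness from the $K(\pi,1)$ conjecture for affine Artin groups, which for these groups is now a theorem. Recall that for a Coxeter system $(W,S)$ with associated Artin group $\mathcal{A}=\mathcal{A}_W$, the $K(\pi,1)$ conjecture asserts that the complement $\mathcal{M}(W)$ of the complexified reflection arrangement of $W$, divided by the free $W$-action, is aspherical. This quotient is homotopy equivalent to the Salvetti complex, which is a \emph{finite} CW complex; hence the conjecture, when true, endows $\mathcal{A}$ with a finite (in particular finite-dimensional) classifying space. For $W$ of affine type this asphericity was recently established in full generality by Paolini and Salvetti \cite{PS}, and in particular holds for the types $\tilde A_n$, $\tilde B_n$, $\tilde C_n$, $\tilde D_n$ appearing in Theorem \ref{mt}.

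Granting this, the argument is formal. A group with a finite-dimensional classifying space has finite cohomological dimension; cohomological dimension does not increase on passing to subgroups, while $\mathrm{cd}(\mathbb{Z}/p)=\infty$ for every prime $p$. Therefore $\mathcal{A}$ contains no subgroup isomorphic to $\mathbb{Z}/p$, hence no nontrivial element of finite order, so $\mathcal{A}$ is torsion free. (Concretely: a nontrivial finite subgroup of $\mathcal{A}$ would act freely on the contractible, finite-dimensional universal cover of the Salvetti complex, which is impossible.)

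The only substantive ingredient is the $K(\pi,1)$ conjecture itself --- the main obstacle --- which I would simply quote from Paolini--Salvetti. It is worth stressing why a weaker input does not suffice: asphericity of $\mathcal{M}(W)$ alone yields only that the \emph{pure} Artin group $\mathcal{P}=\pi_1(\mathcal{M}(W))$ is torsion free, whereas $\mathcal{A}$ sits in an extension $1\to\mathcal{P}\to\mathcal{A}\to W\to 1$ with $W$ an infinite Coxeter group carrying plenty of torsion, and extensions of a torsion-full group by a torsion-free group need not be torsion free; one genuinely needs asphericity of the quotient $\mathcal{M}(W)/W$. For the particular types $\tilde A_n$ and $\tilde C_n$ one could instead invoke classical descriptions of these Artin groups (as braid groups of the annulus, respectively as subgroups of suitable finite-type Artin groups), whose torsion-freeness comes from Deligne's $K(\pi,1)$ theorem; but the uniform statement for all affine types is handled most cleanly by citing Paolini--Salvetti.
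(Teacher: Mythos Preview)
Your proposal is correct and takes essentially the same approach as the paper: the paper's entire proof is the single sentence ``This was recently proved in \cite{PS},'' i.e., a bare citation of Paolini--Salvetti. You have simply made explicit the standard deduction (finite-dimensional $K(\pi,1)$ $\Rightarrow$ finite cohomological dimension $\Rightarrow$ torsion free) that the paper leaves implicit in that citation.
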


\begin{proof} This was recently proved in \cite{PS}.\end{proof}

\begin{thm}(\cite{All}) \label{All} Let $\cal A$ be an Artin group, and $\cal O$ be an orbifold as 
described in the following table. Then, $\cal A$ can be embedded as a normal 
subgroup in $\pi_1^{orb}(B_n({\cal O}))$. The third column gives the quotient
group $\pi_1^{orb}(B_n({\cal O}))/{\cal A}$.

\medskip
\centerline{
\begin{tabular}{|l|l|l|l|}
\hline
{\bf Artin group of type} & {\bf Orbifold $\cal O$}&{\bf Quotient group}&$n$\\
\hline \hline
&&&\\
$B_n$& ${\Bbb C}(1,0)$ &$<1>$& $n>1$\\
\hline
&&&\\
$\tilde A_{n-1}$& ${\Bbb C}(1,0)$ &${\Bbb Z}$& $n>2$\\
\hline
&&&\\
$\tilde B_n$&${\Bbb C}(1,1;(2))$  &${\Bbb Z}/2$& $n>2$\\
\hline
&&&\\
$\tilde C_n$&${\Bbb C}(2,0)$ &$<1>$&$n>1$\\
  \hline
  &&&\\
  $\tilde D_n$&${\Bbb C}(0,2;(2,2))$&${\Bbb Z}_2\times{\Bbb Z}_2$&$n>2$\\
  \hline
\end{tabular}}

\medskip
\centerline{\rm{Table}}
\end{thm}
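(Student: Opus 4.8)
Since the statement is quoted from \cite{All}, the plan is to recover the relevant part of that construction and then set up the dictionary between our orbifolds ${\Bbb C}(m,k;q)$ and Allcock's braid-picture surfaces. The common mechanism in all five rows is the \emph{braid picture}: one realizes the standard Artin generators of ${\cal A}$ --- those attached to the Coxeter graph of type $B_n$, $\tilde A_{n-1}$, $\tilde B_n$, $\tilde C_n$ or $\tilde D_n$ --- as half-twists among $n$ marked points moving in the orbifold ${\cal O}$, the punctures and the order-$2$ cone points of ${\cal O}$ supplying exactly the extra nodes that distinguish these graphs from type $A_{n-1}$. Concretely I would send the type-$A$ subchain to the usual half-twists $\sigma_1,\ldots,\sigma_{n-1}$ of $\pi_1^{orb}(B_n({\cal O}))$ and send each remaining Artin generator to the half-twist carrying one marked point around the appropriate puncture (for the $B_n$, $\tilde A_{n-1}$, $\tilde C_n$ ends) or past the appropriate order-$2$ cone point (for the $\tilde B_n$, $\tilde D_n$ ends), and then verify the braid and commutation relations of ${\cal A}$ diagrammatically. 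The thing to keep track of is the interaction of the marked points with the order-$2$ cone points: this is what those cone points are there for, and it --- together with the punctures --- produces exactly the features of these Coxeter graphs that are absent in type $A_{n-1}$, namely the affine nodes, the label-$4$ edges and the forked $D$-type ends. This yields a homomorphism $\varphi\colon{\cal A}\to\pi_1^{orb}(B_n({\cal O}))$; it will emerge below that $\varphi$ is injective with normal image.

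It then remains to show that $\varphi$ is injective and to identify $\pi_1^{orb}(B_n({\cal O}))/\varphi({\cal A})$. For the quotient I would write down the evident surjection $\psi\colon\pi_1^{orb}(B_n({\cal O}))\to Q$ that kills $\sigma_1,\ldots,\sigma_{n-1}$ and every half-twist around a puncture and retains only the classes of small loops around the cone points, together with --- in the $\tilde A_{n-1}$ case only --- the total winding number of the marked points around the unique puncture. Since a loop around an order-$2$ cone point has order $2$ in $\pi_1^{orb}(B_n({\cal O}))$, this gives $Q=\langle 1\rangle$ for $B_n$ and $\tilde C_n$ (no cone points, and the full orbifold braid group is already the Artin group), $Q={\Bbb Z}$ for $\tilde A_{n-1}$ (winding around the puncture), $Q={\Bbb Z}/2$ for $\tilde B_n$ (one order-$2$ cone point), and $Q={\Bbb Z}/2\times{\Bbb Z}/2$ for $\tilde D_n$ (two order-$2$ cone points), exactly matching the table. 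The content of the theorem is then the single identity $\varphi({\cal A})=\ker\psi$: it simultaneously yields that $\varphi$ is injective and that $\varphi({\cal A})$ is normal with quotient $Q$.

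I expect the main obstacle to be precisely that identity --- ruling out that ${\cal A}$ maps onto a proper quotient of $\ker\psi$, or collapses --- and this is where the work in \cite{All} lies. A natural approach, compatible with the rest of this note, is induction on $n$ via the strand-forgetting fibration behind Theorem \ref{MR}: the projection $\pi_1^{orb}(B_n({\cal O}))\to\pi_1^{orb}(B_{n-1}({\cal O}))$ should restrict on $\varphi({\cal A})$ to the analogous ``one fewer strand'' map within the same family of Artin groups, with free kernel into which the corresponding free subgroup of ${\cal A}$ injects, the base case being a classical surface-group statement; alternatively one can invoke the known faithfulness of the braid actions in which these Artin groups naturally sit. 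Finally, to close the computation of the quotient in the affine rows one uses that ${\cal A}$ is torsion-free (Theorem \ref{AO}): this prevents $\varphi({\cal A})$ from absorbing the $2$-torsion carried by the loops around the cone points, and so forces $\ker\psi$ all the way down to $\varphi({\cal A})$. In the write-up the proof then reduces to a reference to the appropriate theorems of \cite{All} once this orbifold dictionary is in place.
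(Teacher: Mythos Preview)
The paper's own proof of this theorem is the single line ``See \cite{All}.'' --- it is a quoted result, not something reproved in the paper. Your proposal goes well beyond that: you sketch the braid-picture mechanism from \cite{All}, set up the map $\varphi$ and the quotient map $\psi$, and outline an inductive strategy for the key identity $\varphi({\cal A})=\ker\psi$. As a summary of what Allcock actually does, the first two paragraphs are a reasonable account, and indeed you yourself conclude that ``the proof then reduces to a reference to the appropriate theorems of \cite{All}.'' In that sense your proposal and the paper agree.

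There is, however, one genuine misstep in your sketch. You invoke Theorem \ref{AO} (torsion-freeness of affine Artin groups, proved in \cite{PS}) to ``close the computation of the quotient'' by arguing that torsion-freeness of ${\cal A}$ prevents $\varphi({\cal A})$ from ``absorbing the $2$-torsion'' coming from the cone-point loops. This is problematic on two counts. First, it is anachronistic and logically out of place: Allcock's paper predates \cite{PS} by many years, so the statement you are proving cannot rely on Theorem \ref{AO}; in the present paper Theorem \ref{AO} is used only later, in the proof of Theorem \ref{mt}, and for a different purpose. Second, the argument as written does not do what you want. The cone-point loops are \emph{not} in $\ker\psi$ to begin with (your $\psi$ is designed precisely to detect them), so there is nothing for $\varphi({\cal A})$ to ``absorb''; what you actually need is the reverse inclusion $\ker\psi\subseteq\varphi({\cal A})$, and torsion-freeness of ${\cal A}$ gives no leverage on that. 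Allcock establishes the equality $\varphi({\cal A})=\ker\psi$ directly via explicit presentations and the braid-picture calculus, without any appeal to torsion-freeness. Drop the reference to Theorem \ref{AO} here and either cite \cite{All} outright (as the paper does) or carry through the presentation-level comparison that Allcock performs.
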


\begin{proof} See \cite{All}.\end{proof}
  
We also need the following.

\begin{prop}\label{fi} If a torsion free, finitely presented poly-$\cal {VF}$ group
  has a normal series with finitely presented subgroups, then the group is
  virtually poly-$\cal F$.\end{prop}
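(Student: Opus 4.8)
The plan is to argue by induction on the length $n$ of the poly-$\cal{VF}$ structure $1=H_0\vtr H_1\vtr\cdots\vtr H_n=G$, in which every $H_i$ is finitely presented and every quotient $H_{i+1}/H_i$ is virtually free. When $n\le 1$ the group $G$ is torsion free and virtually free, hence free (a torsion free group with a free subgroup of finite index has cohomological dimension at most one, so is free), and therefore already poly-$\cal F$.

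For the inductive step I would first pass to $H_{n-1}$. It is normal in $G$, torsion free as a subgroup of $G$, finitely presented, and carries the poly-$\cal{VF}$ structure $1=H_0\vtr\cdots\vtr H_{n-1}$ of length $n-1$ with finitely presented terms, so by the inductive hypothesis $H_{n-1}$ has a finite index subgroup $L$ with a poly-$\cal F$ structure. I would then replace $L$ by a subgroup that is poly-$\cal F$ \emph{and} normal in $G$: since $H_{n-1}$ is finitely generated it has only finitely many subgroups of index at most $[H_{n-1}:L]$, so their intersection $L'$ has finite index in $H_{n-1}$, is characteristic in $H_{n-1}$ (hence normal in $G$, because $H_{n-1}\vtr G$), and lies inside $L$. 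Intersecting a poly-$\cal F$ normal series of $L$ with $L'$ and using that subgroups of free groups are free shows that $L'$ too has a poly-$\cal F$ structure, say $1=L'_0\vtr\cdots\vtr L'_m=L'$ with each $L'_{j+1}/L'_j$ free.

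Finally I would analyse $G/L'$, which sits in $1\to H_{n-1}/L'\to G/L'\to G/H_{n-1}\to 1$ with $H_{n-1}/L'$ finite and $G/H_{n-1}$ virtually free; since $G$ is finitely generated so is $G/L'$, and an extension of a finitely generated virtually free group by a finite group is virtually free, whence $G/L'$ is virtually free. Choosing a free subgroup $\bar F\le G/L'$ of finite index and letting $F\le G$ be its preimage, one has $[G:F]<\infty$, $L'\vtr F$, and $F/L'\cong\bar F$ free, so $1=L'_0\vtr\cdots\vtr L'_m=L'\vtr F$ is a poly-$\cal F$ structure on $F$. Hence $G$ is virtually poly-$\cal F$, completing the induction.

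The crux is the passage from $L$ to the characteristic subgroup $L'$: the subgroup supplied by the inductive hypothesis is only finite index in $H_{n-1}$, not normal in $G$ (and $G/H_{n-1}$ is in general only virtually free, not free), so one cannot directly stack a free quotient on top of it; finite generation of $H_{n-1}$ and of $G$ is exactly what rescues this, while torsion freeness of $G$ is used to start the induction and is inherited by $H_{n-1}$. The one subsidiary fact worth checking is the extension statement ``virtually free by finite is virtually free'', which one obtains by pulling back a free finite index subgroup of the quotient and then splitting the resulting extension after centralizing the finite kernel.
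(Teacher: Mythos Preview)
Your proof is correct and follows essentially the same route as the paper: induct on the length, use torsion freeness at the base case to turn virtually free into free, apply the inductive hypothesis to $H_{n-1}$, descend to a characteristic (hence $G$-normal) finite index poly-$\cal F$ subgroup using finite generation, and pull back a free finite index subgroup of the resulting virtually free quotient. You supply more detail than the paper in two places (why the characteristic subgroup is still poly-$\cal F$, and why a finite-by-virtually-free extension is virtually free), but the architecture is identical.
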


\begin{proof} Let $H$ be a poly-$\cal {VF}$ group of length $n$ satisfying the 
  hypothesis of the statement.
  The proof is by induction on $n$. If $n=1$, then $H$ is virtually free, and hence free, since it is
  torsion free (\cite{S}). Therefore, assume that the lemma is true for all 
  poly-$\cal {VF}$ groups of length $\leq n-1$ satisfying the hypothesis. Consider a
  finitely presented normal series for $H$ giving the poly-$\cal {VF}$ structure.
  Then, $H_{n-1}$ is finitely presented, torsion free and has a poly-$\cal {VF}$
  structure of length $n-1$. Hence, by the induction hypothesis, there is a finite index
  subgroup $K\leq H_{n-1}$ and $K$ is poly-$\cal F$. Since $H_{n-1}$ is finitely
  presented, we can find a finite index subgroup $K'$ of $K$ which is 
  also a characteristic subgroup of $H_{n-1}$. Hence, $K'$ is also a poly-$\cal F$
  group, and is a normal subgroup of $H=H_n$ with quotient virtually
  free. Let $q:H\to H/K'$ be the quotient map. Consider a free subgroup $L$ of $H/K'$ of finite
  index, then $q^{-1}(L)$ is a finite index poly-$\cal F$ subgroup of $H$.

  This proves the Proposition.\end{proof}

Now, we are ready to prove our main theorem.

\begin{proof}[Proof of Theorem \ref{mt}] From the Table of Theorem \ref{All}, we see
  that the Artin group of type
  $\tilde A_n$ is a subgroup of the finite type Artin group of type $B_{n+1}$. Hence,
  by \cite{Br}, the Artin group of type $\tilde A_n$ is virtually poly-$\cal F$.
  
  Now, let $\cal A$ be an Artin group of type $\tilde B_n$, $\tilde C_n$ or $\tilde D_n$.
  Then, by Theorem \ref{All}, $\cal A$ can be embedded as a 
  normal subgroup in $\pi_1^{orb}(B_n({\Bbb C}(k,m;q)))$ of finite index, for some suitable $k,m,q$ and $n$.
  Next, note that by Corollary \ref{MRC}, $\pi_1^{orb}(B_n({\Bbb C}(k,m;q)))$ is poly-$\cal {VF}$ by
  a normal series consisting of finitely presented subgroups. Since
  $\cal A$ is finitely presented and of finite index in $\pi_1^{orb}(B_n({\Bbb C}(k,m;q)))$, it follows that 
  $\cal A$ is also poly-$\cal {VF}$, by a normal series consisting of finitely presented
  subgroups. But by Theorem \ref{AO}, $\cal A$ is
  also torsion free. Hence, by Proposition \ref{fi} $\cal A$ is virtually poly-$\cal F$.

  The $G(de,e,r)$ ($d,r\geq 2$) type case is easily deduced from the fact that, 
  this Artin group can be embedded as a subgroup in the finite type Artin group of type
  $B_r$. See [\cite{CLL}, Proposition 4.1].
  
Therefore, we have completed the proof of Theorem \ref{mt}.\end{proof}

\newpage
\bibliographystyle{plain}

\begin{thebibliography}{60}

  \bibitem{All}
  D. Allcock,
  \newblock Braid pictures of Artin groups,
\newblock {\em Trans. Amer. Math. Soc.} 354 (2002), no. 9, 
3455-3474. 

\bibitem{MB}
  M. Bestvina,
  \newblock Non-positively curved aspects of Artin groups of finite type,
  \newblock {\em Geom. Topol.} 3 (1999), 269-302.

\bibitem{Br}
E. Brieskorn,
\newblock Sur les groupes de tresses [d'apr\`{e}s V.I. Arnol'd]
\newblock S\'{e}minaire Bourbaki, 24\`{e}me ann\'{e}e (1971/1972), 
Exp. No. 401, pp.21-44. Lecture Notes in Math., Vol 317, Springer, 
Berlin, 1973.

\bibitem{CLL}
R. Corran, E. Lee and S. Lee,
\newblock Braid groups of imprimitive complex reflection groups,
\newblock {\em J. Algebra} 427 (2015), 387-425.

\bibitem{PS}
  G. Paulini and M. Salvetti,
  \newblock Proof of the $K(\pi, 1)$-conjecture for affine Artin groups,
  \newblock arXiv:1907.11795.

\bibitem{R}
S.K. Roushon,
  \newblock Configuration Lie groupoids and orbifold braid groups,
  \newblock arXiv:2006.07106.

\bibitem{S}
  J.R. Stallings,
  \newblock On torsion-free groups with infinitely many ends,
\newblock {\em Ann. of Math.}, (2) 88 (1968), 312-334.
  
  \end{thebibliography}
\ifx\undefined\bysame
\newcommand{\bysame}{\leavevmode\hbox to3em{\hrulefill}\,}
\fi

\end{document}